\newtheorem{thm}{Theorem}
\newtheorem{prop}[thm]{Proposition}
\newtheorem{lem}[thm]{Lemma}
\theoremstyle{definition}
\title{Multiplicative partition functions for reverse plane partitions derived from an integrable dynamical system}
\author{Shuhei Kamioka}
\begin{document}
\maketitle

\begin{abstract}
  A close connection of reverse plane partitions with
  an integrable dynamical system called the discrete two-dimensional (2D) Toda molecule is clarified.
  It is shown that a multiplicative partition function for reverse plane partition
  of arbitrary shape with bounded parts
  can be obtained from each non-vanishing solution to the discrete 2D Toda molecule.
  As an example a partition function which generalizes MacMahon's triple product formula as well as
  Gansner's multi-trace generating function is derived from a specific solution to the dynamical system.
\end{abstract}

\section{Introduction}

Let $\lambda$ be an (integer) partition or the corresponding Young diagram.
A {\em reverse plane partition} of shape $\lambda$ is
a filling of cells in $\lambda$ with nonnegative integers such that all rows and columns are weakly increasing.
One of the most prominent results in the study of reverse (or ordinary) plane partitions is
the discovery of {\em multiplicative} generating functions, namely those which can be nicely factored.

The first discovery is due to MacMahon \cite{MacMahon(1916CA2)} who proved the triple product formula
\begin{align} \label{eq:MacMahon}
  \sum_{\pi} q^{|\pi|} = \prod_{i=1}^{r} \prod_{j=1}^{c} \prod_{k=1}^{n} \frac{1-q^{i+j+k-1}}{1-q^{i+j+k-2}}
\end{align}
for plane partitions $\pi$ of $r \times c$ rectangular shape with parts at most $n$.
MacMahon's study on plane partitions was later revived by Stanley \cite{Stanley(1971)}.
Among his vast amounts of results on plane partitions
a multiplicative generating function involving the trace statistic is of great importance \cite{Stanley(1973)}.
Stanley's trace generating function was much refined by Gansner \cite{Gansner(1981HG)} who derived
the multi-trace generating function
\begin{align} \label{eq:Gansner}
  \sum_{\pi \in \mathrm{RPP}(\lambda)} \prod_{\ell=1-r}^{c-1} x_{\ell}^{\mathsf{tr}_{\ell}(\pi)}
  = \prod_{(i,j) \in \lambda} \left( 1 - \prod_{\ell=j-\lambda'_j}^{\lambda_i-i} x_{\ell} \right)^{-1}
\end{align}
where $\mathrm{RPP}(\lambda)$ denotes
the set of reverse plane partitions $\pi = (\pi_{i,j})$ of shape $\lambda$ with $r$ rows and $c$ columns,
$\mathsf{tr}_{\ell}(\pi) = \sum_{-i+j=\ell} \pi_{i,j}$ the $\ell$-trace, and
$\lambda'$ the shape conjugate with $\lambda$.
Note that plane partitions considered in \eqref{eq:MacMahon} are those with bounded parts but
reverse plane partitions in \eqref{eq:Gansner} are those with unbounded parts.

In this paper a close connection of reverse plane partitions with an integrable dynamical system, called
the {\em discrete two-dimensional (2D) Toda molecule} \cite{Hirota-Tsujimoto-Imai(1993RIMS)}, is clarified.
Especially it is shown that a multiplicative partition functions for reverse plane partitions can be derived from
each non-vanishing solution to the dynamical system (Theorem \ref{thm:PFsProd} in Section \ref{sec:partitionFunctions})
where reverse plane partitions considered are those of arbitrary shape with bounded parts.
As a concrete example a partition function which generalizes
both MacMahon's triple product formula \eqref{eq:MacMahon} and
Gansner's multi-trace generating function \eqref{eq:Gansner} is derived from
a specific solution (Theorem \ref{thm:partitionFunctionExample} in Section \ref{sec:example}).
The result is an extension of the author's partition function for
rectangular-shaped plane partitions \cite{Kamioka(2015+),Kamioka(2016)}.

The key idea comes from a combinatorial interpretation of the discrete 2D Toda molecule in terms of
non-intersecting lattice paths (Section \ref{sec:latticePaths}).
Note that Viennot \cite{Viennot(2000)} takes a similar approach to count non-intersecting Dyck paths by using
the quotient-difference (qd) algorithm for Pad\'e approximation.


\section{Solutions to the discrete 2D Toda molecule}
\label{sec:solutions}

We see a brief review on the integrable dynamical system discussed throughout the paper.
The discrete two-dimensional (2D) Toda molecule is one of the most typical integrable dynamical systems
that was introduced as a discrete analogue of the Toda lattice \cite{Hirota-Tsujimoto-Imai(1993RIMS)}.
The evolution of the discrete 2D Toda molecule is described by the difference equations
\begin{subequations} \label{eq:d2DToda}
  \begin{gather}
    \label{eq:d2DTodaSum}
    a^{(s,t+1)}_{n} + b^{(s+1,t)}_{n} = a^{(s,t)}_{n} + b^{(s,t)}_{n+1}, \\
    \label{eq:d2DTodaProd}
    a^{(s,t+1)}_{n} b^{(s+1,t)}_{n+1} = a^{(s,t)}_{n+1} b^{(s,t)}_{n+1}, \\
    \label{eq:d2DTodaBC}
    (s,t) \in \mathbb{Z}^{2}, \quad n \in \mathbb{Z}_{\ge 0}, \quad b^{(s,t)}_{0} = 0.
  \end{gather}
\end{subequations}
Through a dependent variable transformation
\begin{align} \label{eq:depVarTrf}
  a^{(s,t)}_{n} = \frac{\tau^{(s+1,t)}_{n+1} \tau^{(s,t)}_{n}}{\tau^{(s+1,t)}_{n} \tau^{(s,t)}_{n+1}}, \qquad
  b^{(s,t)}_{n} = \frac{\tau^{(s,t+1)}_{n-1} \tau^{(s,t)}_{n+1}}{\tau^{(s,t+1)}_{n} \tau^{(s,t)}_{n}}
\end{align}
with $\tau^{(s,t)}_{0} = 1$ the following equation is obtained from \eqref{eq:d2DToda}
\begin{subequations} \label{eq:bilinearEq}
  \begin{gather}
    \tau^{(s+1,t+1)}_{n-1} \tau^{(s,t)}_{n+1}
    - \tau^{(s+1,t+1)}_{n} \tau^{(s,t)}_{n}
    + \tau^{(s+1,t)}_{n} \tau^{(s,t+1)}_{n} = 0, \\
    (s,t) \in \mathbb{Z}^{2}, \quad n \in \mathbb{Z}_{\ge 1}, \quad \tau^{(s,t)}_{0} = 1,
  \end{gather}
\end{subequations}
that is the so-called bilinear form of the discrete 2D Toda molecule.

The bilinear form \eqref{eq:bilinearEq} is solved by the determinant
\begin{align} \label{eq:detSol}
  \tau^{(s,t)}_{n} = \det_{0 \le i,j < n}(f_{s+i,t+j}) =
  \begin{vmatrix}
    f_{s,t}     & \cdots & f_{s,t+j}     & \cdots & f_{s,t+n-1}     \\
    \vdots      &        & \vdots        &        & \vdots          \\
    f_{s+i,t}   & \cdots & f_{s+i,t+j}   & \cdots & f_{s+i,t+n-1}   \\
    \vdots      &        & \vdots        &        & \vdots          \\
    f_{s+n-1,t} & \cdots & f_{s+n-1,t+j} & \cdots & f_{s+n-1,t+n-1} \\
  \end{vmatrix}
\end{align}
where $f = f_{i,j}$ is an arbitrary function defined over $\mathbb{Z}^{2}$.
The discrete 2D Toda molecule \eqref{eq:d2DToda} is therefore solved by $a^{(s,t)}_{n}$, $b^{(s,t)}_{n}$ given by
\eqref{eq:depVarTrf} with \eqref{eq:detSol} if the determinant does not vanish.
Conversely there exists a function $f$ on $\mathbb{Z}^{2}$ which satisfies
\eqref{eq:depVarTrf} with \eqref{eq:detSol} for
any solution $a^{(s,t)}_{n} \neq 0$, $b^{(s,t)}_{n} \neq 0$ to \eqref{eq:d2DToda}.
It is not difficult to see the following correspondence between $a^{(s,t)}_{n}$, $b^{(s,t)}_{n}$ and $f$.

\begin{prop} \label{prop:solutions}
  For each solution $a^{(s,t)}_{n} \neq 0$, $b^{(s,t)}_{n} \neq 0$ to
  the discrete 2D Toda molecule \eqref{eq:d2DToda}
  there exists a function $f = f_{i,j}$ on $\mathbb{Z}^{2}$ which gives
  the same solution through \eqref{eq:depVarTrf} with \eqref{eq:detSol}.
  Moreover such an $f$ is uniquely determined up to the transformation $f_{i,j} \to \varphi_j f_{i,j}$
  by any non-vanishing function $\varphi = \varphi_j$ on $\mathbb{Z}$.
\end{prop}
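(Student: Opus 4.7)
I would prove existence and uniqueness separately, using the $n=0$ instance of the $a$-formula in \eqref{eq:depVarTrf} as the bridge between $f$ and the solution. Since $\tau^{(s,t)}_{0}=1$, that instance reads $a^{(s,t)}_{0}=\tau^{(s+1,t)}_{1}/\tau^{(s,t)}_{1}$, which both identifies $f_{s,t}:=\tau^{(s,t)}_{1}$ up to a $t$-dependent rescaling and shows how to reconstruct $f$ explicitly from $a^{(\cdot,\cdot)}_{0}$.

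For existence, I start from any non-vanishing choice of $f_{0,t}$ and propagate along $s$ by telescoping, $f_{s,t}=f_{0,t}\prod_{s'=0}^{s-1}a^{(s',t)}_{0}$ (and symmetrically for $s<0$), so that $a^{(s,t)}_{0}=f_{s+1,t}/f_{s,t}$ holds by construction. I then set $\tau^{(s,t)}_{n}:=\det_{0\le i,j<n}(f_{s+i,t+j})$ and verify by induction on $n$ that \eqref{eq:depVarTrf} returns the given $(a,b)$. The algebraic engine is the Desnanot--Jacobi (Dodgson condensation) identity, which forces this $\tau$ to satisfy the bilinear form \eqref{eq:bilinearEq}. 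Starting from the base cases $n=0$ (trivial) and $n=1$ (immediate from the construction of $f$), the inductive step reconciles the two recursive descriptions of $\tau^{(s,t)}_{n+1}$ coming from the $a$- and $b$-formulas of \eqref{eq:depVarTrf}, using \eqref{eq:d2DTodaProd} at the appropriate level. Non-vanishing of the $\tau^{(s,t)}_{n}$ appearing in denominators is automatic from non-vanishing of $a$ and $b$.

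For uniqueness, suppose $f$ and $\tilde f$ both produce the same $(a,b)$. From $a^{(s,t)}_{0}=f_{s+1,t}/f_{s,t}=\tilde f_{s+1,t}/\tilde f_{s,t}$, the ratio $\tilde f_{s,t}/f_{s,t}$ is independent of $s$; call it $\varphi_{t}$ (necessarily non-vanishing since $f$ and $\tilde f$ are), so $\tilde f_{s,t}=\varphi_{t}f_{s,t}$. Conversely, under this transformation the determinant $\tau^{(s,t)}_{n}$ is rescaled by the $s$-independent factor $\prod_{j=0}^{n-1}\varphi_{t+j}$, and a short calculation shows that this factor cancels identically in both formulas of \eqref{eq:depVarTrf}, so every such gauge transformation indeed preserves $(a,b)$.

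The main obstacle I anticipate is the existence induction, namely matching the determinantal $\tau$ to the given $(a,b)$ for all $n\ge 2$. Desnanot--Jacobi delivers the bilinear identity cleanly, but promoting the $a$- and $b$-formulas level-by-level requires the correct bookkeeping with \eqref{eq:d2DTodaProd}, together with care that none of the $\tau^{(s,t)}_{n}$ appearing in denominators vanishes along the way. Once this is in place, the uniqueness argument and the explicit description of the gauge are routine.
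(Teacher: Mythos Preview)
The paper does not supply a proof of this proposition; it is stated as something ``not difficult to see,'' and no proof environment follows it. Your plan is correct and is a natural way to fill in the omitted details. The identity $a^{(s,t)}_{0}=f_{s+1,t}/f_{s,t}$ you single out as the bridge is precisely the relation the paper itself invokes later, in the proof of Theorem~\ref{thm:fundamentalThm}, to write $f_{i,j}/f_{\mathsf{x}(j),j}$ as a telescoping product of $a^{(\cdot,j)}_{0}$'s; so your uniqueness argument is aligned with the paper's implicit reasoning.

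For existence, the Desnanot--Jacobi plus induction scheme is sound. The one point worth making fully explicit is the interleaved induction establishing simultaneously that $\tau^{(s,t)}_{m}\neq 0$ for $m\le n$ and that the reconstructed $(\tilde a,\tilde b)$ match the given $(a,b)$ through the appropriate levels. Once $\tau_{m}\neq 0$ for all $m\le n$, the quantities $\tilde a_{m}$ ($m\le n-1$) and $\tilde b_{m}$ ($m\le n$) are well defined; the sum relation \eqref{eq:d2DTodaSum} at level $n-1$ then forces $\tilde b^{(s,t)}_{n}=b^{(s,t)}_{n}\neq 0$, and since only $\tau_{n}$'s sit in the denominator of the $b$-formula this gives $\tau^{(s,t)}_{n+1}\neq 0$, after which \eqref{eq:d2DTodaProd} propagates $\tilde a_{n}=a_{n}$. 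You mention only \eqref{eq:d2DTodaProd} in your sketch, but both Toda relations are needed in this step.
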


Giving a non-vanishing solution $a^{(s,t)}_{n}$, $b^{(s,t)}_{n}$ to
the discrete 2D Toda molecule is thus essentially equivalent to giving a function $f$ over $\mathbb{Z}^{2}$.

\section{Lattice path combinatorics}
\label{sec:latticePaths}

In this paper we adopt a matrix-like coordinate to draw a square lattice $\mathbb{Z}^{2}$ where
the nearest neighbors $(i+1,j)$, $(i-1,j)$, $(i,j+1)$ and $(i,j-1)$ of a lattice point $(i,j)$ are located on
the south, north, east and west of $(i,j)$ respectively.
We call a subset $\mathbb{L}$ of $\mathbb{Z}^{2}$ {\em regular} such that
(i) if $(i,j) \in \mathbb{L}$ then $(i+k,j+k) \in \mathbb{L}$ for all $k \ge 1$;
(ii) if $(i,j) \in \mathbb{L}$ then $(i-k,j) \not\in \mathbb{L}$ and $(i,j-k) \not\in \mathbb{L}$ for some $k \ge 1$.
We call a point $(i,j) \in \mathbb{L}$ a {\em north boundary point} if $(i-1,j) \not\in \mathbb{L}$;
similarly a {\em west boundary point} if $(i,j-1) \not\in \mathbb{L}$.
We call a point $(i,j) \in \mathbb{L}$ a {\em convex corner} if $(i,j)$ is a north and south boundary point.
The interest is in lattice paths on
a regular subset $\mathbb{L}$ of $\mathbb{Z}^{2}$ consisting of north and east steps.
See Figure \ref{fig:regularSubset} for example.
\begin{figure}
  \centering
  \begin{tikzpicture}[x=1em,y=1em,font=\small]
    \draw [gray,->] (0,2) -- (0,-9) node [below] {$x$};
    \draw [gray,->] (-2,0) -- (9,0) node [right] {$y$};
    \foreach \j in {7,...,8} {\fill (\j,2) circle (0.1);}
    \foreach \j in {4,...,8} {\fill (\j,1) circle (0.1);}
    \foreach \j in {4,...,8} {\fill (\j,0) circle (0.1);}
    \foreach \j in {3,...,8} {\fill (\j,-1) circle (0.1);}
    \foreach \j in {1,...,8} {\fill (\j,-2) circle (0.1);}
    \foreach \j in {1,...,8} {\fill (\j,-3) circle (0.1);}
    \foreach \j in {0,...,8} {\fill (\j,-4) circle (0.1);}
    \foreach \j in {0,...,8} {\fill (\j,-5) circle (0.1);}
    \foreach \j in {0,...,8} {\fill (\j,-6) circle (0.1);}
    \foreach \j in {-1,...,8} {\fill (\j,-7) circle (0.1);}
    \foreach \j in {-2,...,8} {\fill (\j,-8) circle (0.1);}
    \draw [thick] (0,-6) -- ++(2,0) -- ++(0,1) -- ++(2,0) -- ++(0,1) -- ++(1,0) -- ++(0,3) -- ++(1,0) -- ++(0,2);
    \draw [red] (7,2) circle (0.25) (4,1) circle (0.25) (4,0) circle (0.25) (3,-1) circle (0.25) (1,-2) circle (0.25) (1,-3) circle (0.25) (0,-4) circle (0.25) (0,-5) circle (0.25) (0,-6) circle (0.25) (-1,-7) circle (0.25) (-2,-8) circle (0.25);
    \draw [blue] (-2,-8) circle (0.4) (-1,-7) circle (0.4) (0,-4) circle (0.4) (1,-2) circle (0.4) (2,-2) circle (0.25) (3,-1) circle (0.4) (4,1) circle (0.4) (5,1) circle (0.25) (6,1) circle (0.25) (7,2) circle (0.4) (8,2) circle (0.25);
  \end{tikzpicture}
  \caption{%
    A regular subset $\mathbb{L}$ of $\mathbb{Z}^{2}$ and a lattice path.
    North and west boundary points are marked blue and red respectively.
    Convex corners are those marked in both the colors.}
  \label{fig:regularSubset}
\end{figure}
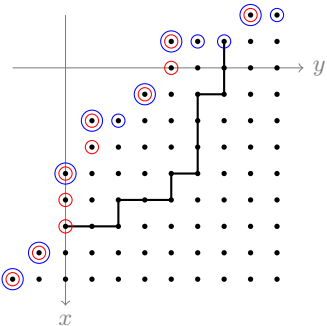

We think of a regular subset $\mathbb{L}$ of $\mathbb{Z}^{2}$ as a graph with
vertices $\mathbb{L}$ and edges connecting nearest neighbors.
We determine the weights of edges by using
a solution $a^{(s,t)}_{n}$, $b^{(s,t)}_{n}$ to the discrete 2D Toda molecule \eqref{eq:d2DToda} as follows.
\begin{enumerate}[(a)]
\item
  The vertical edge with north endpoint at $(i,j)$ is weighted by $a^{(i-n,j-n)}_{n}$ if
  $(i-n,j-n)$ is a west boundary point of $\mathbb{L}$.
\item
  The vertical edge with south endpoint at $(i,j)$ is weighted by $b^{(i-n,j-n)}_{n}$ if
  $(i-n,j-n)$ is a north boundary point of $\mathbb{L}$.
\item
  Every horizontal edge is weighted by $1$.
\end{enumerate}
See Figure \ref{fig:weights} for example.
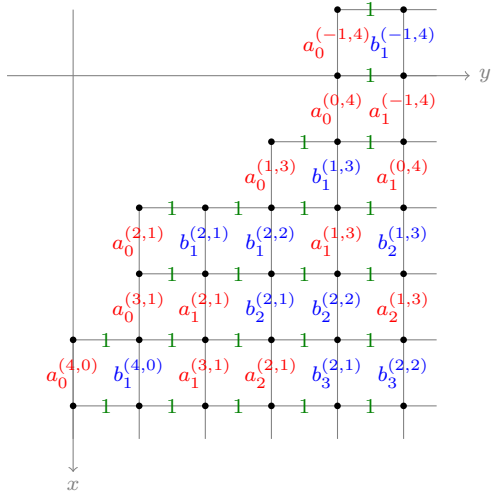
\begin{figure}
  \centering
  \begin{tikzpicture}[x=2.5em,y=2.5em,font=\footnotesize]
    \draw [gray,->] (0,1) -- (0,-6) node [below] {$x$};
    \draw [gray,->] (-1,0) -- (6,0) node [right] {$y$};
    \draw [gray] (4,1) -- (5.5,1) (4,0) -- (5.5,0) (3,-1) -- (5.5,-1) (1,-2) -- (5.5,-2) (1,-3) -- (5.5,-3) (0,-4) -- (5.5,-4) (0,-5) -- (5.5,-5) (0,-4) -- (0,-5.5) (1,-2) -- (1,-5.5) (2,-2) -- (2,-5.5) (3,-1) -- (3,-5.5) (4,1) -- (4,-5.5) (5,1) -- (5,-5.5);
    \foreach \j in {4,...,5} {\fill (\j,1) circle (0.05);}
    \foreach \j in {4,...,5} {\fill (\j,0) circle (0.05);}
    \foreach \j in {3,...,5} {\fill (\j,-1) circle (0.05);}
    \foreach \j in {1,...,5} {\fill (\j,-2) circle (0.05);}
    \foreach \j in {1,...,5} {\fill (\j,-3) circle (0.05);}
    \foreach \j in {0,...,5} {\fill (\j,-4) circle (0.05);}
    \foreach \j in {0,...,5} {\fill (\j,-5) circle (0.05);}
    \foreach \k in {0,...,0} {\node [red] at (0+\k,-4-1/2-\k) {$a^{(4,0)}_{\k}$};}
    \foreach \k in {1,...,1} {\node [blue] at (0+\k,-4+1/2-\k) {$b^{(4,0)}_{\k}$};}
    \foreach \k in {0,...,1} {\node [red] at (1+\k,-3-1/2-\k) {$a^{(3,1)}_{\k}$};}
    \foreach \k in {0,...,2} {\node [red] at (1+\k,-2-1/2-\k) {$a^{(2,1)}_{\k}$};}
    \foreach \k in {1,...,3} {\node [blue] at (1+\k,-2+1/2-\k) {$b^{(2,1)}_{\k}$};}
    \foreach \k in {1,...,3} {\node [blue] at (2+\k,-2+1/2-\k) {$b^{(2,2)}_{\k}$};}
    \foreach \k in {0,...,2} {\node [red] at (3+\k,-1-1/2-\k) {$a^{(1,3)}_{\k}$};}
    \foreach \k in {1,...,2} {\node [blue] at (3+\k,-1+1/2-\k) {$b^{(1,3)}_{\k}$};}
    \foreach \k in {0,...,1} {\node [red] at (4+\k,-0-1/2-\k) {$a^{(0,4)}_{\k}$};}
    \foreach \k in {0,...,1} {\node [red] at (4+\k,1-1/2-\k) {$a^{(-1,4)}_{\k}$};}
    \foreach \k in {1,...,1} {\node [blue] at (4+\k,1+1/2-\k) {$b^{(-1,4)}_{\k}$};}
    \foreach \j in {4,...,4} {\node [green!50!black] at (\j+1/2,1) {$1$};}
    \foreach \j in {4,...,4} {\node [green!50!black] at (\j+1/2,0) {$1$};}
    \foreach \j in {3,...,4} {\node [green!50!black] at (\j+1/2,-1) {$1$};}
    \foreach \j in {1,...,4} {\node [green!50!black] at (\j+1/2,-2) {$1$};}
    \foreach \j in {1,...,4} {\node [green!50!black] at (\j+1/2,-3) {$1$};}
    \foreach \j in {0,...,4} {\node [green!50!black] at (\j+1/2,-4) {$1$};}
    \foreach \j in {0,...,4} {\node [green!50!black] at (\j+1/2,-5) {$1$};}
  \end{tikzpicture}
  \caption{The weights of edges.}
  \label{fig:weights}
\end{figure}
We define the weight $w(\mathbb{L}; a,b; P)$ of a lattice path $P$ on $\mathbb{L}$ to be
the product of the weights of all edges passed by $P$.
We conventionally consider empty paths $P$ with no steps for which $w(\mathbb{L};a,b;P) = 1$.
For $(i,j) \in \mathbb{L}$ and $(k,\ell) \in \mathbb{L}$ we further define
\begin{align}
  g(\mathbb{L};a,b;i,j;k,\ell) = \sum_{P} w(\mathbb{L};a,b;P)
\end{align}
where the sum ranges over all lattice paths on $\mathbb{L}$ going from $(i,j)$ to $(k,\ell)$.

Let $\mathsf{x}(j)$ denote the $x$-coordinate (or the vertical ---) of
the north boundary point of $\mathbb{L}$ with $y$-coordinate (or horizontal ---) equal to $j$;
let $\mathsf{y}(i)$ the $y$-coordinate of the west boundary point of $\mathbb{L}$ with $x$-coordinate equal to $i$.
The following theorem gives a combinatorial interpretation of the discrete 2D Toda molecule and
refines Proposition \ref{prop:solutions}.

\begin{thm} \label{thm:fundamentalThm}
  Let $a^{(s,t)}_{n} \neq 0$, $b^{(s,t)}_{n} \neq 0$ be a solution to the discrete 2D Toda molecule \eqref{eq:d2DToda},
  and let a function $f = f_{i,j}$ on $\mathbb{Z}^{2}$ give
  the same solution through \eqref{eq:depVarTrf} with \eqref{eq:detSol}.
  Let $\mathbb{L}$ be a regular subset of $\mathbb{Z}^{2}$.
  For $(i,j) \in \mathbb{L}$ then
  \begin{align}
    \frac{f_{i,j}}{f_{\mathsf{x}(j),j}} = g(\mathbb{L};a,b;i,\mathsf{y}(i);\mathsf{x}(j),j).
  \end{align}
\end{thm}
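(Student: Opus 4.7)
The plan is to prove the identity by induction on the common path length $L = (i - \mathsf{x}(j)) + (j - \mathsf{y}(i))$, with the bilinear form \eqref{eq:bilinearEq} of the discrete 2D Toda molecule serving as the decisive algebraic input. The base case $L=0$ forces $i = \mathsf{x}(j)$ and $j = \mathsf{y}(i)$ simultaneously, so $(i,j)$ is a convex corner of $\mathbb{L}$; the only path is then empty with weight $1$, matching $f_{i,j}/f_{i,j} = 1$.

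For the inductive step, I would decompose each lattice path from $(i, \mathsf{y}(i))$ to $(\mathsf{x}(j), j)$ by its last step: either an east step from $(\mathsf{x}(j), j-1)$ carrying weight $1$, or a north step from $(\mathsf{x}(j)+1, j)$ whose weight is an $a$- or $b$-value determined by rules (a), (b) depending on whether the SE-diagonal through $(\mathsf{x}(j), j)$ begins at a west- or north-boundary point. Each resulting sub-path has length $L-1$. Because these sub-paths do not in general run from a west-boundary point to a north-boundary point of $\mathbb{L}$, a direct appeal to the induction hypothesis is unavailable; I would therefore strengthen the statement to a lemma identifying $g(\mathbb{L}; a, b; p, q; r, s)$ for arbitrary endpoints $(p,q), (r,s) \in \mathbb{L}$ with a suitable ratio of $\tau$-functions (as suggested by \eqref{eq:depVarTrf}), the theorem then being the specialization to $(p,q) = (i,\mathsf{y}(i))$ and $(r,s) = (\mathsf{x}(j),j)$, at which the $\tau$-ratio collapses to $f_{i,j}/f_{\mathsf{x}(j),j}$.

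Once every edge weight and every $f$-ratio has been rewritten as a $\tau$-ratio via \eqref{eq:depVarTrf}, the inductive recursion ``(weight of the last north edge)$\times$(northward sub-path) $+$ (eastward sub-path)'' should reduce precisely to the bilinear identity \eqref{eq:bilinearEq}, closing the induction. The main obstacle I anticipate is pinpointing the correct $\tau$-ratio formula for the strengthened $g$: near convex corners of $\mathbb{L}$ the $a$- and $b$-type weights on a single SE-diagonal are coupled nontrivially, and the chosen $\tau$-ratio must be compatible with \eqref{eq:bilinearEq} after summation over the two cases of the decomposition. A parallel strategy I would run is to verify the claim first for reference configurations such as quarter-planes, where there is a single convex corner and the weight structure is transparent, and then extend to arbitrary regular $\mathbb{L}$ by tracking how $g$ transforms under local modifications (removing or adjoining a convex corner); consistency of those local transformations with \eqref{eq:bilinearEq} would give an alternative route to the general statement.
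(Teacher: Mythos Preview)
Your primary induction-on-length plan runs into exactly the obstacle you flag: once you peel off the last step and land at $(\mathsf{x}(j)+1,j)$ or $(\mathsf{x}(j),j-1)$, neither endpoint is of the boundary type the theorem requires, and there is no clean $\tau$-ratio formula for $g$ at arbitrary interior endpoints to fall back on. A single lattice path corresponds to $n=1$ in \eqref{eq:detSol}, so the relevant object is a single $f_{i,j}$, not a higher $\tau^{(s,t)}_n$; the bilinear identity \eqref{eq:bilinearEq} involves $\tau$'s at three consecutive levels $n-1,n,n+1$ and does not collapse to a two-term recursion for $f$-ratios along a last-step decomposition. So the strengthening you are searching for does not exist in the form you hope, and the induction does not close.

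Your ``parallel strategy,'' on the other hand, is essentially what the paper does. The paper's Lemma~\ref{lem:weightConserved} shows that $g(\mathbb{L};a,b;\cdot)$ is unchanged when a convex corner not on the diagonal of either endpoint is deleted; the three local cases (EE, NE/EN, NN) are checked directly against \eqref{eq:d2DTodaSum} and \eqref{eq:d2DTodaProd} (the original Toda relations, not the bilinear form \eqref{eq:bilinearEq}). Rather than building up from a quarter-plane, the paper then \emph{removes} from $\mathbb{L}$ every point strictly north and west of $(i,j)$; on the resulting region $\mathbb{L}'$ the path from $(i,\mathsf{y}(i))$ to $(\mathsf{x}(j),j)$ is forced to go east to $(i,j)$ and then north, and its weight is $\prod_{k=\mathsf{x}(j)}^{i-1} a^{(k,j)}_0 = f_{i,j}/f_{\mathsf{x}(j),j}$ by \eqref{eq:depVarTrf}. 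So pursue the corner-removal route, but use \eqref{eq:d2DToda} rather than \eqref{eq:bilinearEq} for the local invariance check, and reduce by deleting corners from the given $\mathbb{L}$ rather than starting from a reference region.
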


In order to prove the theorem we use the following lemma.

\begin{lem} \label{lem:weightConserved}
  Let $\mathbb{L}$ be a regular subset of $\mathbb{Z}^{2}$ with a convex corner $(s,t) \in \mathbb{L}$.
  Let $\mathbb{L}'$ denote the regular subset of $\mathbb{Z}^{2}$ obtained from $\mathbb{L}$ by deleting $(s,t)$.
  For $(i,j)$ and $(k,\ell)$ in $\mathbb{L}'$ with $i-j \neq s-t$ and $k-\ell \neq s-t$ then
  \begin{align} \label{eq:weightConserved}
    g(\mathbb{L};a,b;i,j;k,\ell) = g(\mathbb{L}';a,b;i,j;k,\ell).
  \end{align}
\end{lem}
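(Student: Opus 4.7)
The plan is to compare the two generating functions by decomposing paths according to how they meet the diagonal $D=\{(s+m,t+m):m\ge 0\}$ through the corner, and then to close the identity using the Toda equations \eqref{eq:d2DTodaSum}, \eqref{eq:d2DTodaProd}. Because every N or E step strictly decreases the quantity $x-y$ by $1$, a lattice path from $(i,j)$ to $(k,\ell)$ meets $D$ in at most one point, and the hypothesis $i-j\ne s-t$, $k-\ell\ne s-t$ guarantees that both endpoints lie off $D$. Paths that avoid $D$ entirely use only edges whose weights are insensitive to whether $(s,t)$ is present, and so contribute equally to $g(\mathbb{L})$ and $g(\mathbb{L}')$; the task reduces to paths that cross $D$.

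A direct inspection of rules (a), (b) shows that the only edges whose weights change when $(s,t)$ is removed are the vertical edges incident to $D$: at $(s+m,t+m)$ the downward edge has weight $a^{(s,t)}_{m}$ in $\mathbb{L}$ and $b^{(s+1,t)}_{m}$ in $\mathbb{L}'$, while the upward edge has weight $b^{(s,t)}_{m}$ in $\mathbb{L}$ and $a^{(s,t+1)}_{m-1}$ in $\mathbb{L}'$. Abbreviating
\[
A_{m}=g(\mathbb{L};a,b;i,j;s+m+1,t+m),\qquad B_{m}=g(\mathbb{L};a,b;s+m,t+m+1;k,\ell)
\]
with the convention $A_{-1}=B_{-1}=0$, the factors $A_{m},B_{m}$ count paths using only edges off $D$ and hence take the same values in the two lattices. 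Splitting on the four combinations of incoming/outgoing directions at the crossing point $(s+m,t+m)$ factors the $m$th piece of $g(\mathbb{L})$ as $\bigl[a^{(s,t)}_{m}A_{m}+A_{m-1}\bigr]\bigl[B_{m}+b^{(s,t)}_{m}B_{m-1}\bigr]$ and the $m$th piece of $g(\mathbb{L}')$ as $\bigl[b^{(s+1,t)}_{m}A_{m}+A_{m-1}\bigr]\bigl[B_{m}+a^{(s,t+1)}_{m-1}B_{m-1}\bigr]$.

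Taking the difference and expanding, the $A_{m-1}B_{m}$ terms cancel directly, and the coefficient of $A_{m}B_{m-1}$, namely $a^{(s,t)}_{m}b^{(s,t)}_{m}-a^{(s,t+1)}_{m-1}b^{(s+1,t)}_{m}$, vanishes by \eqref{eq:d2DTodaProd}. After reindexing, the remaining diagonal terms combine into
\[
\sum_{m\ge 0}\bigl(a^{(s,t)}_{m}+b^{(s,t)}_{m+1}-a^{(s,t+1)}_{m}-b^{(s+1,t)}_{m}\bigr)A_{m}B_{m},
\]
each summand of which is zero by \eqref{eq:d2DTodaSum}. The leftover term $a^{(s,t)}_{0}A_{0}B_{0}$ coming from the unique path through $(s,t)$ itself (present in $\mathbb{L}$ but not in $\mathbb{L}'$) fits into the same expression at $n=0$ thanks to the boundary condition $b^{(s+1,t)}_{0}=0$, which turns \eqref{eq:d2DTodaSum} at $n=0$ into $a^{(s,t+1)}_{0}=a^{(s,t)}_{0}+b^{(s,t)}_{1}$. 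This completes the cancellation and yields \eqref{eq:weightConserved}.

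The main obstacle I foresee is the bookkeeping in the case split on whether $(s+1,t)$ and $(s,t+1)$ lie in $\mathbb{L}$: in the degenerate sub-cases some of the vertical edges above are absent and the corresponding $A_{m}$ or $B_{m}$ vanish identically, so the weight identifications must be stated conditionally. Because the two Toda relations used at the closing step are unconditional algebraic equalities and not inequalities, however, the same cancellation carries through in every sub-case.
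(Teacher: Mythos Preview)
Your proof is correct and follows the same underlying idea as the paper's: localize the comparison to the diagonal $D$ through $(s,t)$, observe that only the vertical edges incident to $D$ change weight, and close the resulting local identities with the Toda equations \eqref{eq:d2DTodaSum}, \eqref{eq:d2DTodaProd} together with the boundary condition \eqref{eq:d2DTodaBC}.

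The only difference is in bookkeeping. The paper decomposes each path as $P_{-}\cup Q\cup P_{+}$ with a two-step piece $Q$ running from $d_{-}$ to $d_{+}$, and then checks three cases according to the shape of $Q$ (two east steps; a north and an east step in either order; two north steps). You instead index by the unique crossing point $(s+m,t+m)$ on $D$ and expand the product $[a^{(s,t)}_{m}A_{m}+A_{m-1}][B_{m}+b^{(s,t)}_{m}B_{m-1}]$. The paper's case (i) is your $A_{m-1}B_{m}$ term, its case (iii) is your $A_{m}B_{m-1}$ term, and its case (ii) is the combination of your $A_{m}B_{m}$ and $A_{m-1}B_{m-1}$ terms; because the paper groups by the pair of $d_{\pm}$-endpoints rather than by the $D$-crossing, it avoids the reindexing step you need to merge $(a^{(s,t)}_{m}-b^{(s+1,t)}_{m})A_{m}B_{m}$ with $(b^{(s,t)}_{m+1}-a^{(s,t+1)}_{m})A_{m}B_{m}$. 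Your handling of $m=0$ via $A_{-1}=B_{-1}=0$ and $b^{(s+1,t)}_{0}=0$ is exactly the paper's $n=0$ subcase of case (ii). So: same argument, slightly different packaging.
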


\begin{proof}
  The difference between $\mathbb{L}$ and $\mathbb{L}'$ is only in
  the existence and the absence of the convex corner $(s,t)$, and
  the weights of vertical edges between the two diagonal lines $d_{-}: y-x = t-s-1$ and $d_{+}: y-x = t-s+1$.
  (The vertical edges between $d_{-}$ and $d_{+}$ are weighted by
  $a^{(s,t)}_{n}$, $b^{(s,t)}_{n}$ on $\mathbb{L}$ and by $a^{(s,t+1)}_{n}$, $b^{(s+1,t)}_{n}$ on $\mathbb{L}'$.)
  Assume that $i-j \neq s-t$ and $k-\ell \neq s-t$ meaning that
  $(i,j)$ and $(k,\ell)$ is outside the region between $d_{-}$ and $d_{+}$. (Those may be on $d_{\pm}$.)
  If both $(i,j)$ and $(k,\ell)$ are either in the south of $d_{-}$ or in the north of $d_{+}$ then
  the identity \eqref{eq:weightConserved} clearly holds since
  lattice paths going from $(i,j)$ to $(k,\ell)$ never enter the region between $d_{-}$ and $d_{+}$.
  In the rest of the proof we thus assume that
  $(i,j)$ is in the south of $d_{-}$ and $(k,\ell)$ in the north of $d_{+}$.

  Each lattice path $P$ going from $(i,j)$ to $(k,\ell)$ is uniquely divided into three subpaths:
  $P_-$ from $(i,j)$ to $d_-$,
  $Q$ of two steps between $d_{-}$ and $d_{+}$ and
  $P_+$ from $d_+$ to $(k,\ell)$.
  Obviously $w(P_{\pm}) = w'(P_{\pm})$ where $w$ and $w'$ are abbreviations of
  $w(\mathbb{L};a,b;\cdot)$ and $w(\mathbb{L}';a,b;\cdot)$ respectively.
  The proof of \eqref{eq:weightConserved} thus amounts to showing that
  $g(i,j;k,\ell) = g'(i,j;k,\ell)$ for each $(i,j)$ on $d_{-}$ and $(k,\ell)$ on $d_{-}$ where
  $g$ and $g'$ are abbreviations of $g(\mathbb{L};a,b;\cdot)$ and $g(\mathbb{L}';a,b;\cdot)$ respectively.
  Since $Q$ is of two steps we have only three cases:
  (i) $(i,j) = (s+n,t+n-1)$ and $(k,\ell) = (s+n,t+n+1)$ for some $n \ge 1$;
  (ii) $(i,j) = (s+n+1,t+n)$ and $(k,\ell) = (s+n,t+n+1)$ for some $n \ge 0$;
  (iii) $(i,j) = (s+n+1,t+n)$ and $(k,\ell) = (s+n-1,t+n)$ for some $n \ge 1$.
  See Figure \ref{fig:lemma}.
  \begin{figure}
    \centering
    \begin{tikzpicture}[x=2.5em,y=2.5em,font=\small]
      \begin{scope}
        \node at (0,1.25) {$\mathbb{L}$ and $\mathbb{L}'$};
        \foreach \j in {-1,0,1} {\fill (\j,0) circle (0.05);}
        \draw [gray,densely dashed] (-1.5,0.5) -- ++(1,-1) node [below right] {$d_-$} (0.5,0.5) -- ++(1,-1) node [below right] {$d_+$};
        \draw [gray] (-1,0) -- node [green!50!black] {$1$} (0,0) -- node [green!50!black] {$1$} (1,0);
        \node at (0,-2) {Case (i)};
      \end{scope}
      \draw [densely dotted] (2.5,2.5) -- ++(0,-5);
      \begin{scope}[xshift=8.5em,yshift=1.25em]
        \begin{scope}
          \node at (0.5,1) {$\mathbb{L}$};
          \foreach \i in {-1,0} {\foreach \j in {0,1} {\fill (\j,\i) circle (0.05);}}
          \draw [gray,dashed] (-0.5,-0.5) -- ++(1,-1) (0.5,0.5) -- ++(1,-1);
          \draw [gray]
          (0,-1) -- node [red] {$a^{(s,t)}_{n}$} (0,0) -- node [green!50!black] {$1$} (1,0)
          (0,-1) -- node [green!50!black] {$1$} (1,-1) -- node [blue] {$b^{(s,t)}_{n+1}$} (1,0);
        \end{scope}
        \begin{scope}[xshift=6.25em]
          \node at (0.5,1) {$\mathbb{L}'$};
          \foreach \i in {-1,0} {\foreach \j in {0,1} {\fill (\j,\i) circle (0.05);}}
          \draw [gray,dashed] (-0.5,-0.5) -- ++(1,-1) (0.5,0.5) -- ++(1,-1);
          \draw [gray]
          (0,-1) -- node [blue] {$b^{(s+1,t)}_{n}$} (0,0) -- node [green!50!black] {$1$} (1,0)
          (0,-1) -- node [green!50!black] {$1$} (1,-1) -- node [red] {$a^{(s,t+1)}_{n}$} (1,0);
        \end{scope}
        \node at (1.75,-2.5) {Case (ii)};
      \end{scope}
      \draw [densely dotted] (8,2.5) -- ++(0,-5);
      \begin{scope}[xshift=23em]
        \begin{scope}
          \node at (0,2) {$\mathbb{L}$};
          \foreach \i in {-1,0,1} {\fill (0,\i) circle (0.05);}
          \draw [gray,dashed] (-0.5,-0.5) -- ++(1,-1) (-0.5,1.5) -- ++(1,-1);
          \draw [gray] (0,-1) -- node [red] {$a^{(s,t)}_{n}$} (0,0) -- node [blue] {$b^{(s,t)}_{n}$} (0,1);
        \end{scope}
        \begin{scope}[xshift=4.5em]
          \node at (0,2) {$\mathbb{L}'$};
          \foreach \i in {-1,0,1} {\fill (0,\i) circle (0.05);}
          \draw [gray,dashed] (-0.5,-0.5) -- ++(1,-1) (-0.5,1.5) -- ++(1,-1);
          \draw [gray] (0,-1) -- node [blue] {$b^{(s+1,t)}_{n}$} (0,0) -- node [red] {$a^{(s,t+1)}_{n-1}$} (0,1);
        \end{scope}
        \node at (1,-2) {Case (iii)};
      \end{scope}
    \end{tikzpicture}
    \caption{Proof of Lemma \ref{lem:weightConserved}.}
    \label{fig:lemma}
  \end{figure}
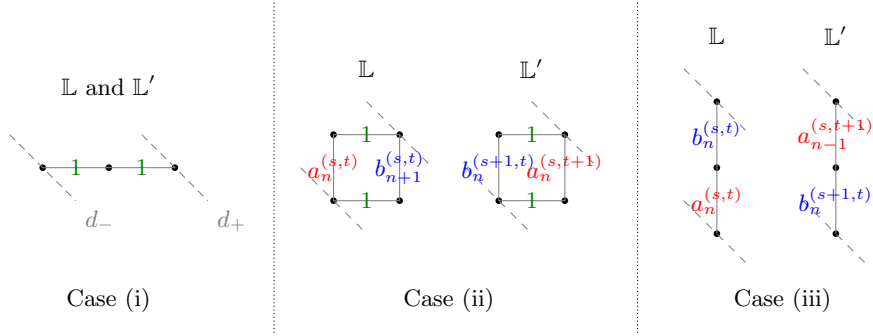

  Case (i):
  The unique lattice path going from $(i,j) = (s+n,t+n-1)$ to $(k,\ell) = (s+n,t+n+1)$ of two east steps is
  both on $\mathbb{L}$ and on $\mathbb{L}'$.
  Thus $g(i,j;k,\ell) = g'(i,j;k,\ell) = 1$.

  Case (ii):
  There are two lattice paths going from $(i,j) = (s+n+1,t+n)$ to $(k,\ell) = (s+n,t+n+1)$
  one of which is $Q_1$ going north and east, the other is $Q_2$ going east and north.
  If $n \ge 1$ then $Q_1$ and $Q_2$ are both on $\mathbb{L}$ and on $\mathbb{L}'$, and
  $w(Q_1) = a^{(s,t)}_{n}$, $w(Q_2) = b^{(s,t)}_{n+1}$, $w'(Q_1) = a^{(s,t+1)}_{n}$ and $w'(Q_2) = b^{(s+1,t)}_{n}$.
  Thus $g(i,j;k,\ell) = a^{(s,t)}_{n} + b^{(s,t)}_{n+1} = a^{(s,t+1)}_{n} + b^{(s+1,t)}_{n} = g'(i,j;k,\ell)$
  because of \eqref{eq:d2DTodaProd}.
  If $n = 0$ then $Q_1$ and $Q_2$ are on $\mathbb{L}$ while only $Q_2$ on $\mathbb{L}'$, and
  $w(Q_1) = a^{(s,t)}_{0}$, $w(Q_2) = b^{(s,t)}_{1}$ and $w'(Q_1) = a^{(s,t+1)}_{0}$.
  ($Q_1$ is not on $\mathbb{L}'$ since $Q_1$ passes through $(s,t) \not\in \mathbb{L}'$.)
  Thus $g(i,j;k,\ell) = a^{(s,t)}_{0} + b^{(s,t)}_{1} = a^{(s,t+1)}_{0} = g'(i,j;k,\ell)$
  because of \eqref{eq:d2DTodaSum} with \eqref{eq:d2DTodaBC}.
  
  Case (iii):
  The unique lattice path going from $(i,j) = (s+n+1,t+n)$ to $(k,\ell) = (s+n-1,t+n)$ of two north steps is
  both on $\mathbb{L}$ and on $\mathbb{L}'$.
  The weight of the lattice path is
  $a^{(s,t)}_{n} b^{(s,t)}_{n}$ on $\mathbb{L}$ and
  $a^{(s,t+1)}_{n-1} b^{(s+1,t)}_{n}$ on $\mathbb{L}'$.
  Thus $g(i,j;k,\ell) = a^{(s,t)}_{n} b^{(s,t)}_{n} = a^{(s,t+1)}_{n-1} b^{(s+1,t)}_{n} = g'(i,j;k,\ell)$
  because of \eqref{eq:d2DTodaProd}.
\end{proof}

\begin{proof}[Proof of Theorem \ref{thm:fundamentalThm}]
  Let $\mathbb{L}'$ denote the regular subset of $\mathbb{Z}^{2}$ defined by
  $\mathbb{L}' = \mathbb{L} \setminus \{ (s,t) \in \mathbb{L};~ \text{$s < i$ and $t < j$} \}$.
  From Lemma \ref{lem:weightConserved} then
  $g(\mathbb{L};a,b;i,\mathsf{y}(i);\mathsf{x}(j),j) = g(\mathbb{L}';a,b;i,\mathsf{y}(i);\mathsf{x}(j),j)$ because
  $\mathbb{L}'$ can be obtained from $\mathbb{L}$ by iterative deletion of convex corners.
  A lattice path going from $(i,\mathsf{y}(i))$ to $(\mathsf{x}(j),j)$ on $\mathbb{L}'$ is unique because
  such a lattice path cannot turn north until $(i,j)$ and cannot turn east from $(i,j)$.
  The weight of the unique lattice path on $\mathbb{L}'$ implies that
  $g(\mathbb{L}';a,b;i,\mathsf{y}(i);\mathsf{x}(j),j) = \prod_{k=\mathsf{x}(j)}^{i-1} a^{(k,j)}_{0}$.
  The last product is equal to $f_{i,j}/f_{\mathsf{x}(j),j}$ because $a^{(k,j)}_{0} = f_{k+1,j} / f_{k,j}$ from
  \eqref{eq:depVarTrf} and \eqref{eq:detSol}.
\end{proof}

Theorem \ref{thm:fundamentalThm} admits
a combinatorial interpretation of the determinant $\tau^{(s,t)}_{n}$ by means of
Gessel--Viennot--Lindstr\"om's method \cite{Gessel-Viennot(1985),Lindstroem(1973)}.
For $(s,t) \in \mathbb{L}$ and $n \ge 0$ we define $\mathrm{LP}(\mathbb{L},s,t,n)$ to be the set of
$n$-tuples $(P_0,\dots,P_{n-1})$ of lattice paths on $\mathbb{L}$ such that
(i) $P_k$ goes from $(s+k,\mathsf{y}(s)+k)$ to $(\mathsf{x}(t)+k,t+k)$ for each $0 \le k < n$, and
(ii) $P_0,\dots,P_{n-1}$ are {\em non-intersecting}: $P_j \cap P_k = \emptyset$ if $j \neq k$.
See Figure \ref{fig:NILatticePaths}, the second figure shows an $n$-tuple of non-intersecting lattice paths.

\begin{thm} \label{thm:NILatticePaths}
  Let $a^{(s,t)}_{n} \neq 0$, $b^{(s,t)}_{n} \neq 0$ be a solution to
  the discrete 2D Toda molecule \eqref{eq:d2DToda}, and
  let a function $f = f_{i,j}$ on $\mathbb{Z}^{2}$ give
  the same solution through \eqref{eq:depVarTrf} with \eqref{eq:detSol}.
  Let $\mathbb{L}$ be a regular subset of $\mathbb{Z}^{2}$.
  For $(s,t) \in \mathbb{L}$ and $n \ge 0$ then
  \begin{align} \label{eq:NILatticePaths}
    \frac{\tau^{(s,t)}_{n}}{\tau^{(\mathsf{x}(t),t)}_{n}}
    = \sum_{(P_0,\dots,P_{n-1}) \in \mathrm{LP}(\mathbb{L},s,t,n)} \prod_{k=0}^{n-1} w(\mathbb{L};a,b;P_k)
  \end{align}
  where $\tau^{(s,t)}_{n} = \det_{0 \le i,j < n}(f_{s+i,t+j})$.
\end{thm}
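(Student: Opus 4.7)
My plan is to apply the Lindström--Gessel--Viennot (LGV) lemma to two carefully chosen source/sink pairs and compare. The essential observation I exploit is that both north- and east-steps decrease $x-y$ by exactly $1$, so every lattice path crosses each of the parallel diagonals $d_s\colon x-y = s-\mathsf{y}(s)$ and $d_t\colon x-y = \mathsf{x}(t)-t$ in a unique point.

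First, I apply LGV to sources $S_i = (s+i, \mathsf{y}(s+i))$ on the west boundary of $\mathbb{L}$ and sinks $T_j = (\mathsf{x}(t+j), t+j)$ on the north boundary. Theorem~\ref{thm:fundamentalThm} gives $g(\mathbb{L};a,b;S_i;T_j) = f_{s+i,t+j}/f_{\mathsf{x}(t+j),t+j}$, and compatible ordering of the boundary points forces only the identity permutation to admit a non-intersecting tuple. Factoring column denominators in the LGV determinant yields
\[
  \det[g(\mathbb{L};a,b;S_i;T_j)]_{0 \le i,j < n} = \frac{\tau^{(s,t)}_n}{\prod_{j=0}^{n-1} f_{\mathsf{x}(t+j),t+j}}.
\]

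The uniqueness of the $d_s$- and $d_t$-crossings gives the convolution identity $g(\mathbb{L};a,b;S_i;T_j) = \sum_{i_1,j_1} g(S_i;A_{i_1})\,g(A_{i_1};B_{j_1})\,g(B_{j_1};T_j)$, i.e.\ a matrix factorization $G_{ST} = G_{SA}\,G_{AB}\,G_{BT}$. Because $S_i$ and $A_{i_1}$ share an $x$-coordinate only when $i_1 = i$ (where the unique connecting path is east-going of weight $1$), the matrix $G_{SA}$ is unit lower triangular; symmetrically $G_{BT}$ is lower triangular with diagonal entries $\omega_j := g(\mathbb{L};a,b;B_j;T_j)$, the weights of the unique north-going segments from $B_j$ to $T_j$ in column $t+j$. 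Thus $\det G_{ST} = \det G_{AB} \cdot \prod_{j=0}^{n-1} \omega_j$, and a second application of LGV to $(A_i, B_j)$ (compatibly ordered along the parallel diagonals $d_s,d_t$) identifies $\det G_{AB}$ with $\sum_{(P_k) \in \mathrm{LP}(\mathbb{L},s,t,n)} \prod_k w(\mathbb{L};a,b;P_k)$.

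Finally I specialize the same argument to $s = \mathsf{x}(t)$: then $A_k$ becomes $(\mathsf{x}(t)+k, \mathsf{y}(\mathsf{x}(t))+k)$, which shares the $x$-coordinate $\mathsf{x}(t)+k$ with $B_j$ only when $j=k$ (with a unique weight-$1$ east-going middle path), so the corresponding middle matrix is unit lower triangular with determinant $1$. This gives $\prod_{j=0}^{n-1} \omega_j = \tau^{(\mathsf{x}(t),t)}_n / \prod_{j=0}^{n-1} f_{\mathsf{x}(t+j),t+j}$. Dividing the two determinant identities eliminates the $f$-products and produces the theorem. The hardest step will be the convolution factorization: verifying rigorously that the strict monotonicity of $x-y$ forces the decomposition through unique $d_s$- and $d_t$-crossings, and that the boundary-to-diagonal matrices $G_{SA}$ and $G_{BT}$ have the claimed unit-triangular structure.
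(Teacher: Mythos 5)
Your argument is correct, and its first half coincides with the paper's: both apply the Lindstr\"om--Gessel--Viennot lemma to the boundary sources $S_i=(s+i,\mathsf{y}(s+i))$ and sinks $T_j=(\mathsf{x}(t+j),t+j)$ and use Theorem \ref{thm:fundamentalThm} to identify the resulting determinant with $\tau^{(s,t)}_n/\prod_k f_{\mathsf{x}(t+k),t+k}$, which is exactly \eqref{eq:NILPaths'}. Where you genuinely diverge is in passing from that sum to $\mathrm{LP}(\mathbb{L},s,t,n)$. The paper argues directly on the non-intersecting \emph{families}: non-intersection freezes the initial east-runs $S_k\to A_k$ and the final north-runs $B_k\to T_k$, so deleting them is a weight-preserving bijection onto $\mathrm{LP}(\mathbb{L},s,t,n)$ up to the constant $\tau^{(\mathsf{x}(t),t)}_n/\prod_k f_{\mathsf{x}(t+k),t+k}$, the weight of the unique frozen configuration. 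You instead factor the \emph{single-path} matrix $G_{ST}=G_{SA}G_{AB}G_{BT}$ through the two diagonals, peel off the triangular factors, apply LGV a second time to $G_{AB}$, and evaluate $\prod_j\omega_j$ by the specialization $s=\mathsf{x}(t)$. This buys something real: you never need to prove that the boundary steps are forced in \emph{every} non-intersecting configuration (the one delicate combinatorial claim in the paper's proof); triangularity of $G_{SA}$ and $G_{BT}$ is a statement about individual paths and is much easier to certify, and the specialization trick replaces the paper's separate identification of the frozen weight with $\tau^{(\mathsf{x}(t),t)}_n$. Two points you flag do need to be (and can be) nailed down: (1) the transfer matrices are square, i.e.\ a path from some $S_i$ with $0\le i\le n-1$ cannot cross $d_s$ at $A_m$ with $m\ge n$ (ruled out by monotonicity of the $x$-coordinate along a path) or with $m<0$ (ruled out because the west boundary $\mathsf{y}(\cdot)$ of a regular set is weakly decreasing, so $A_m\notin\mathbb{L}$ for $m<0$; symmetrically for $d_t$ using the north boundary); (2) $G_{BT}$ is triangular in the opposite sense to what you wrote, since $g(\mathbb{L};a,b;B_{j'};T_j)=0$ unless $j'\le j$ by $y$-monotonicity --- harmless, as its determinant is still $\prod_j\omega_j$.
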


\begin{proof}
  Gessel--Viennot--Lindstr\"om's method yields from Theorem \ref{thm:fundamentalThm} that
  \begin{align} \label{eq:NILPaths'}
    \frac{\tau^{(s,t)}_{n}}{\prod_{k=0}^{n-1} f_{\mathsf{x}(t+k),t+k}}
    = \sum_{(P'_0,\dots,P'_{n-1})} \prod_{k=0}^{n-1} w(\mathbb{L};a,b;P'_k)
  \end{align}
  where the sum ranges over all $n$-tuples $(P'_0,\dots,P'_{n-1})$ of
  non-intersecting lattice paths on $\mathbb{L}$ such that
  $P'_k$ goes from $(s+k,\mathsf{y}(s+k))$ to $(\mathsf{x}(t+k),t+k)$ for each $0 \le k < n$.
  Eliminating the steps frozen due to the non-intersecting condition
  we obtain $(P_0,\dots,P_{n-1}) \in \mathrm{LP}(\mathbb{L},s,t,n)$,
  see Figure \ref{fig:NILatticePaths} for example.
  The weight of the eliminated frozen steps is equal to
  the weight of a unique configuration of non-intersecting lattice paths from
  $\tau^{(\mathsf{x}(t),t)}_{n} / \prod_{k=0}^{n-1} f_{\mathsf{x}(t+k),t+k}$,
  see the first and the last figures in Figure \ref{fig:NILatticePaths} for example.
  Thus $\tau^{(s,t)}_{n}$ is equal to
  the right-hand side of \eqref{eq:NILatticePaths} multiplied by $\tau^{(\mathsf{x}(t),t)}_{n}$.
  \begin{figure}
    \centering
    \begin{tikzpicture}[x=0.75em,y=0.75em,font=\scriptsize]
      \begin{scope}
        \draw [gray,->] (0,2) -- (0,-9);
        \draw [gray,->] (-2,0) -- (9,0);
        \foreach \j in {7,...,8} {\fill (\j,2) circle (0.1);}
        \foreach \j in {4,...,8} {\fill (\j,1) circle (0.1);}
        \foreach \j in {4,...,8} {\fill (\j,0) circle (0.1);}
        \foreach \j in {3,...,8} {\fill (\j,-1) circle (0.1);}
        \foreach \j in {1,...,8} {\fill (\j,-2) circle (0.1);}
        \foreach \j in {1,...,8} {\fill (\j,-3) circle (0.1);}
        \foreach \j in {0,...,8} {\fill (\j,-4) circle (0.1);}
        \foreach \j in {0,...,8} {\fill (\j,-5) circle (0.1);}
        \foreach \j in {0,...,8} {\fill (\j,-6) circle (0.1);}
        \foreach \j in {-1,...,8} {\fill (\j,-7) circle (0.1);}
        \foreach \j in {-2,...,8} {\fill (\j,-8) circle (0.1);}
        \draw [thick]
        (0,-5) node [left] {$P'_0$} -- ++(0,1) -- ++(2,0) -- ++(0,1) -- ++(1,0) -- ++(0,2) -- ++(1,0) -- ++(0,1) -- ++(1,0) -- ++(0,1)
        (0,-6) node [left] {$P'_1$} -- ++(1,0) -- ++(0,1) -- ++(3,0) -- ++(0,2) -- ++(1,0) -- ++(0,2) -- ++(1,0) -- ++(0,2)
        (-1,-7) node [left] {$P'_2$} -- ++(5,0) -- ++(0,1) -- ++(1,0) -- ++(0,1) -- ++(1,0) -- ++(0,2) -- ++(1,0) -- ++(0,5)
        (-2,-8) node [left] {$P'_3$} -- ++(9,0) -- ++(0,2) -- ++(1,0) -- ++(0,8);
        \draw [very thick,orange]
        (0,-6) -- ++(1,0) (6,1) -- ++(0,-1)
        (-1,-7) -- ++(3,0) (7,2) -- ++(0,-3)
        (-2,-8) -- ++(5,0) (8,2) -- ++(0,-4);
        \node [font=\small] at (3.5,-10.5) {$\tau^{(s,t)}_{n} / \prod_{k=0}^{n-1} f_{\mathsf{x}(t+k),t+k}$};
      \end{scope}
      \draw [ultra thick,->] (10,-3.5) -- ++(1.5,0);
      \begin{scope}[xshift=11em]
        \draw [gray,->] (0,2) -- (0,-9);
        \draw [gray,->] (-2,0) -- (9,0);
        \foreach \j in {7,...,8} {\fill (\j,2) circle (0.1);}
        \foreach \j in {4,...,8} {\fill (\j,1) circle (0.1);}
        \foreach \j in {4,...,8} {\fill (\j,0) circle (0.1);}
        \foreach \j in {3,...,8} {\fill (\j,-1) circle (0.1);}
        \foreach \j in {1,...,8} {\fill (\j,-2) circle (0.1);}
        \foreach \j in {1,...,8} {\fill (\j,-3) circle (0.1);}
        \foreach \j in {0,...,8} {\fill (\j,-4) circle (0.1);}
        \foreach \j in {0,...,8} {\fill (\j,-5) circle (0.1);}
        \foreach \j in {0,...,8} {\fill (\j,-6) circle (0.1);}
        \foreach \j in {-1,...,8} {\fill (\j,-7) circle (0.1);}
        \foreach \j in {-2,...,8} {\fill (\j,-8) circle (0.1);}
        \draw [thick]
        (0,-5) node [left] {$P_0$} -- ++(0,1) -- ++(2,0) -- ++(0,1) -- ++(1,0) -- ++(0,2) -- ++(1,0) -- ++(0,1) -- ++(1,0) -- ++(0,1)
        (1,-6) node [left] {$P_1$} -- ++(0,1) -- ++(3,0) -- ++(0,2) -- ++(1,0) -- ++(0,2) -- ++(1,0) -- ++(0,1)
        (2,-7) node [left] {$P_2$} -- ++(2,0) -- ++(0,1) -- ++(1,0) -- ++(0,1) -- ++(1,0) -- ++(0,2) -- ++(1,0) -- ++(0,2)
        (3,-8) node [left] {$P_3$} -- ++(4,0) -- ++(0,2) -- ++(1,0) -- ++(0,4);
        \node [font=\small] at (3.5,-10.5) {$\mathrm{LP}(\mathbb{L},s,t,n)$};
      \end{scope}
      \draw [densely dotted] (26,3) -- ++(0,-15);
      \begin{scope}[xshift=23em]
        \draw [gray,->] (0,2) -- (0,-9);
        \draw [gray,->] (-2,0) -- (9,0);
        \foreach \j in {7,...,8} {\fill (\j,2) circle (0.1);}
        \foreach \j in {4,...,8} {\fill (\j,1) circle (0.1);}
        \foreach \j in {4,...,8} {\fill (\j,0) circle (0.1);}
        \foreach \j in {3,...,8} {\fill (\j,-1) circle (0.1);}
        \foreach \j in {1,...,8} {\fill (\j,-2) circle (0.1);}
        \foreach \j in {1,...,8} {\fill (\j,-3) circle (0.1);}
        \foreach \j in {0,...,8} {\fill (\j,-4) circle (0.1);}
        \foreach \j in {0,...,8} {\fill (\j,-5) circle (0.1);}
        \foreach \j in {0,...,8} {\fill (\j,-6) circle (0.1);}
        \foreach \j in {-1,...,8} {\fill (\j,-7) circle (0.1);}
        \foreach \j in {-2,...,8} {\fill (\j,-8) circle (0.1);}
        \draw [thick]
        (4,1) -- ++(1,0)
        (4,0) -- ++(2,0) -- ++(0,1)
        (3,-1) -- ++(4,0) -- ++(0,3)
        (1,-2) -- ++(7,0) -- ++(0,4);
        \node [font=\small] at (3.5,-10.5) {$\tau^{(\mathsf{x}(t),t)}_{n} / \prod_{k=0}^{n-1} f_{\mathsf{x}(t+k),t+k}$};
      \end{scope}
    \end{tikzpicture}
    \caption{%
      Proof of Theorem \ref{thm:NILatticePaths} where $(s,t) = (5,4)$ and $n = 4$.
      In the first figure the steps in orange are frozen due to the non-intersecting condition.}
    \label{fig:NILatticePaths}
  \end{figure}
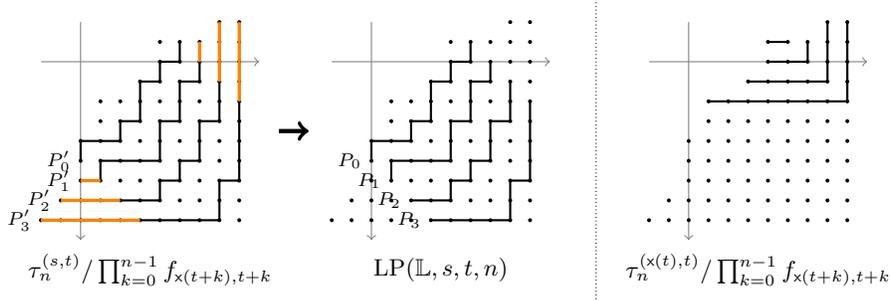
\end{proof}

Note that the left-hand side of \eqref{eq:NILatticePaths} can be expressed as
\begin{align} \label{eq:tauProd}
  \frac{\tau^{(s,t)}_{n}}{\tau^{(\mathsf{x}(t),t)}_{n}}
  = \prod_{i=1}^{s-\mathsf{x}(t)} \prod_{k=1}^{n} a^{(s-i,t)}_{k-1}
\end{align}
from \eqref{eq:depVarTrf}.
We can readily evaluate the sum in \eqref{eq:NILatticePaths}, a partition function for non-intersecting lattice paths,
by using this formula.

\section{Multiplicative partition functions for reverse plane partitions}
\label{sec:partitionFunctions}

Let $\lambda$ be a partition and let $n \ge 0$.
We write $\mathrm{RPP}(\lambda,n)$ for
the set of reverse plane partitions of shape $\lambda$ with parts at most $n$.
Let $r$ and $c$ denote the numbers of rows and columns in $\lambda$ respectively.
We then define a regular subset $\mathbb{L}(\lambda)$ of $\mathbb{Z}^{2}$ by
\begin{align}
  \mathbb{L}(\lambda) = \{ (i,j) \in \mathbb{Z}_{\ge 0}^{2};~ j \ge c - \lambda_{r-i} \}
\end{align}
where $\lambda_i$ denotes the $i$-th part of $\lambda$ for $1 \le i \le r$ and $\lambda_i = c$ for $i \le 0$.
There is a bijection between $\mathrm{LP}(\mathbb{L}(\lambda),r,c,n)$ and $\mathrm{RPP}(\lambda,n)$
which is described as follows.
Given an $n$-tuple $(P_0,\dots,P_{n-1}) \in \mathrm{LP}(\mathbb{L}(\lambda),r,c,n)$ of
non-intersecting lattice paths on $\mathbb{L}(\lambda)$
\begin{enumerate}[(i)]
\item
  move the lattice path $P_k$ northwest by $(-k,-k)$ for each $0 \le k < n$;
\item
  fill in the cells between $P_{n-k-1}$ and $P_{n-k}$ with $k$ for each $0 \le k \le n$ where
  $P_{-1}$ is the lattice path going from $(r,0)$ to $(0,c)$ along the border of $\mathbb{L}(\lambda)$ and
  $P_{n}$ is that going east from $(r,0)$, turning north at $(r,c)$ and going north to $(0,c)$;
\item
  rotate 180$^{\circ}$ to obtain a reverse plane partition in $\mathrm{RPP}(\lambda,n)$.
\end{enumerate}
Figure \ref{fig:bijection} demonstrates the bijection by an example.
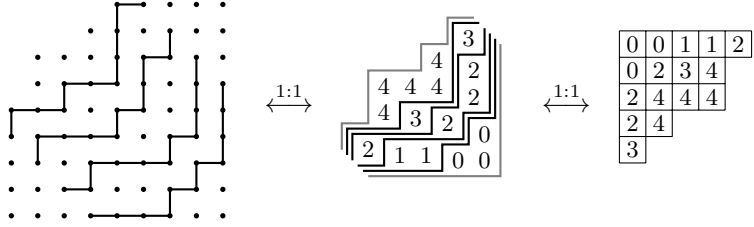
\begin{figure}
  \centering
  \begin{tikzpicture}[x=1em,y=1em]
    \begin{scope}
      \foreach \j in {4,...,8} {\fill (\j,0) circle (0.1);}
      \foreach \j in {3,...,8} {\fill (\j,-1) circle (0.1);}
      \foreach \j in {1,...,8} {\fill (\j,-2) circle (0.1);}
      \foreach \j in {1,...,8} {\fill (\j,-3) circle (0.1);}
      \foreach \i in {4,...,8} {\foreach \j in {0,...,8} {\fill (\j,-\i) circle (0.1);}}
      \draw [thick] (0,-5) -- ++(0,1) -- ++(2,0) -- ++(0,1) -- ++(2,0) -- ++(0,3) -- ++(1,0);
      \draw [thick] (1,-6) -- ++(0,1) -- ++(3,0) -- ++(0,1) -- ++(1,0) -- ++(0,2) -- ++(1,0) -- ++(0,1);
      \draw [thick] (2,-7) -- ++(1,0) -- ++(0,1) -- ++(3,0) -- ++(0,1) -- ++(1,0) -- ++(0,3);
      \draw [thick] (3,-8) -- ++(3,0) -- ++(0,1) -- ++(1,0) -- ++(0,1) -- ++(1,0) -- ++(0,3);
    \end{scope}
    \node at (10.5,-3.5) {$\overset{1:1}{\longleftrightarrow}$};
    \begin{scope}[xshift=13em,yshift=-1em]
      \draw [thick,gray] (0,-5) ++(-0.5,0.5) -- ++(0,1) -- ++(1,0) -- ++(0,2) -- ++(2,0) -- ++(0,1) -- ++(1,0) -- ++(0,1) -- ++(1,0);
      \draw [thick] (0,-5) ++(-0.3,0.3) -- ++(0,1) -- ++(2,0) -- ++(0,1) -- ++(2,0) -- ++(0,3) -- ++(1,0);
      \draw [thick] (0,-5) ++(-0.1,0.1) -- ++(0,1) -- ++(3,0) -- ++(0,1) -- ++(1,0) -- ++(0,2) -- ++(1,0) -- ++(0,1);
      \draw [thick] (0,-5) ++(0.1,-0.1) -- ++(1,0) -- ++(0,1) -- ++(3,0) -- ++(0,1) -- ++(1,0) -- ++(0,3);
      \draw [thick] (0,-5) ++(0.3,-0.3) -- ++(3,0) -- ++(0,1) -- ++(1,0) -- ++(0,1) -- ++(1,0) -- ++(0,3);
      \draw [thick,gray] (0,-5) ++(0.5,-0.5) -- ++(5,0) -- ++(0,5);
      \begin{scope}[font=\small]
        \node at (5-1/2+0.4,-5+1/2-0.4) {0};
        \node at (4-1/2+0.4,-5+1/2-0.4) {0};
        \node at (5-1/2+0.4,-4+1/2-0.4) {0};
        \node at (3-1/2+0.2,-5+1/2-0.2) {1};
        \node at (2-1/2+0.2,-5+1/2-0.2) {1};
        \node at (1-1/2+0.0,-5+1/2-0.0) {2};
        \node at (4-1/2+0.0,-4+1/2-0.0) {2};
        \node at (5-1/2+0.0,-3+1/2-0.0) {2};
        \node at (5-1/2+0.0,-2+1/2-0.0) {2};
        \node at (3-1/2-0.2,-4+1/2+0.2) {3};
        \node at (5-1/2-0.2,-1+1/2+0.2) {3};
        \node at (2-1/2-0.4,-4+1/2+0.4) {4};
        \node at (2-1/2-0.4,-3+1/2+0.4) {4};
        \node at (3-1/2-0.4,-3+1/2+0.4) {4};
        \node at (4-1/2-0.4,-3+1/2+0.4) {4};
        \node at (4-1/2-0.4,-2+1/2+0.4) {4};
      \end{scope}
    \end{scope}
    \node at (21,-3.5) {$\overset{1:1}{\longleftrightarrow}$};
    \begin{scope}[xshift=23em,yshift=-1em,font=\small]
      \draw
      (0,0) -- ++(5,0) (0,-1) -- ++(5,0) (0,-2) -- ++(4,0) (0,-3) -- ++(4,0) (0,-4) -- ++(2,0) (0,-5) -- ++(1,0)
      (0,0) -- ++(0,-5) (1,0) -- ++(0,-5) (2,0) -- ++(0,-4) (3,0) -- ++(0,-3) (4,0) -- ++(0,-3) (5,0) -- ++(0,-1);
      \node at (1-1/2,-1+1/2) {0};
      \node at (2-1/2,-1+1/2) {0};
      \node at (1-1/2,-2+1/2) {0};
      \node at (3-1/2,-1+1/2) {1};
      \node at (4-1/2,-1+1/2) {1};
      \node at (5-1/2,-1+1/2) {2};
      \node at (2-1/2,-2+1/2) {2};
      \node at (1-1/2,-3+1/2) {2};
      \node at (1-1/2,-4+1/2) {2};
      \node at (3-1/2,-2+1/2) {3};
      \node at (1-1/2,-5+1/2) {3};
      \node at (4-1/2,-2+1/2) {4};
      \node at (2-1/2,-3+1/2) {4};
      \node at (3-1/2,-3+1/2) {4};
      \node at (4-1/2,-3+1/2) {4};
      \node at (2-1/2,-4+1/2) {4};
    \end{scope}
  \end{tikzpicture}
  \caption{%
    The bijection between $\mathrm{LP}(\mathbb{L}(\lambda),r,c,n)$ and $\mathrm{RPP}(\lambda,n)$ where
    $\lambda = (5,4,4,2,1)$ with $r = 5$ rows and $c = 5$ columns, and $n = 4$.}
  \label{fig:bijection}
\end{figure}
It should be noted that this bijection is essentially the same as the classical interpretation of plane partitions by
``zig-zag'' non-intersecting paths \cite{Krattenthaler(2015arXiv)}.

We set up weight for reverse plane partitions which is equivalent to
the weight for lattice paths defined in Section \ref{sec:latticePaths}.
Let $\lambda' = (\lambda'_1,\dots,\lambda'_c)$ denote the partition conjugate with $\lambda$.
We define $\alpha_{i,j}$ by
\begin{align} \label{eq:alpha}
  \alpha_{i+k,\lambda_i+k} = a^{(r-i,c-\lambda_i)}_{n-k-1}, \qquad
  \alpha_{\lambda'_j+k,j+k-1} = b^{(r-\lambda'_j,c-j)}_{n-k}
\end{align}
for $1 \le i \le r$, $1 \le j \le c$ and $k < n$ where
$a^{(s,t)}_{n} \neq 0$, $b^{(s,t)}_{n} \neq 0$ is a solution to the discrete 2D Toda molecule \eqref{eq:d2DToda}.
We then define the weight of a reverse plane partition $\pi$ by
\begin{subequations} \label{eq:RPPWeight}
  \begin{align}
    v(\lambda,n;a,b;\pi) &= \prod_{(i,j) \in \lambda} v_{i,j}(\lambda,n;a,b;\pi) \qquad \text{with} \\
    \label{eq:weightParts}
    v_{i,j}(\lambda,n;a,b;\pi) &= \prod_{k=1}^{\pi_{i,j}} \frac{\alpha_{i+k-1,j+k-2}}{\alpha_{i+k-1,j+k-1}}.
  \end{align}
\end{subequations}

\begin{lem} \label{lem:weightBijection}
  Let $\lambda$ be a partition with $r$ rows and $c$ columns, and let $n \ge 0$.
  Assume that $\pi \in \mathrm{RPP}(\lambda,n)$ and $(P_0,\dots,P_{n-1}) \in \mathrm{LP}(\mathbb{L}(\lambda),r,c,n)$
  corresponds to each other by the bijection.
  Then
  \begin{align} \label{eq:weightBijection}
    v(\lambda,n;a,b;\pi) = \frac{\prod_{k=0}^{n-1} w(\mathbb{L}(\lambda);a,b;P_k)}
    {\prod_{i=1}^{r} \prod_{k=1}^{n} a^{(r-i,c-\lambda_i)}_{k-1}}.
  \end{align}
\end{lem}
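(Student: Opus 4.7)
The plan is to prove \eqref{eq:weightBijection} by induction on $|\pi| = \sum_{(i,j)\in\lambda} \pi_{i,j}$, using the fact that every $\pi \in \mathrm{RPP}(\lambda, n)$ is reachable from the zero filling by a sequence of admissible single-entry increments $\pi_{i,j} \to \pi_{i,j} + 1$.

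For the base case $\pi = 0$, the LHS equals $1$ (every $v_{i,j}$ is an empty product), so the claim reduces to $\prod_{k=0}^{n-1} w(\mathbb{L}(\lambda); a, b; P_k) = \prod_{i=1}^{r}\prod_{k=1}^{n} a^{(r-i, c-\lambda_i)}_{k-1}$ for the image of the zero filling under the bijection. Steps (i)--(iii) of the bijection force each $P_k$ to be the translate by $(k, k)$ of the northwest border of $\mathbb{L}(\lambda)$, going from $(r+k, k)$ to $(k, c+k)$ and traversing exactly one west boundary vertical edge per row of $\lambda$. Reading off the weights of these edges using rule (a) of Section \ref{sec:latticePaths} and multiplying over $k = 0, \dots, n-1$ yields exactly the RHS.

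For the inductive step, suppose $\pi'$ arises from $\pi$ by incrementing $\pi_{i,j}$ from $m$ to $m+1$. On the LHS, only $v_{i,j}$ changes, picking up the factor $\alpha_{i+m, j+m-1}/\alpha_{i+m, j+m}$ corresponding to the new term $k = m+1$ in \eqref{eq:weightParts}. On the RHS, the bijection shows that only the single path $P_{n-m-1}$ — the one separating cells of value $\leq m$ from those of value $\geq m+1$ — is modified, and the modification is a local corner swap near the cell $(i,j)$ exchanging an ``east--north'' pair of edges for a ``north--east'' pair. By rules (a)--(c) of Section \ref{sec:latticePaths} combined with \eqref{eq:alpha}, the two vertical edges removed and added carry weights $\alpha_{i+m, j+m}$ and $\alpha_{i+m, j+m-1}$ respectively, so the path weight is multiplied by precisely the same factor as the LHS, closing the induction.

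The main obstacle will be verifying the identification of the swapped edges with the predicted $\alpha$'s, which requires juggling several coordinate conventions at once: the $(-k, -k)$ translation in step (i) of the bijection, the $180^{\circ}$ rotation in step (iii), the definition $\mathbb{L}(\lambda) = \{(i, j) : j \geq c - \lambda_{r-i}\}$ (rows indexed from the top), and the indexing in \eqref{eq:alpha}. A separate case analysis is also needed when $(i, j)$ lies on the boundary of $\lambda$, since then the swapped vertical edges may be west or north boundary edges of $\mathbb{L}(\lambda)$ (covered by rules (a) or (b), giving $a$- or $b$-weights) rather than generic interior edges. Once these bookkeeping details are settled, the local weight change matches $\alpha_{i+m, j+m-1}/\alpha_{i+m, j+m}$ exactly and the induction goes through.
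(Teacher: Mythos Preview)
Your proposal is correct and follows essentially the same approach as the paper's (labeled) sketch of proof. The paper simply asserts that $\alpha_{i,j}$ was \emph{defined} so that $v(\pi)$ is proportional to $\prod_k w(P_k)$, and then determines the proportionality constant by evaluating both sides at the empty filling $\pi^{\emptyset}$; your induction on $|\pi|$ is exactly the natural way to substantiate that assertion, with your base case reproducing the paper's computation of $\kappa^{-1}=\prod_{i,k} a^{(r-i,c-\lambda_i)}_{k-1}$ and your inductive corner-swap step making precise the claim that the definition \eqref{eq:alpha} was rigged to track local path-weight changes.
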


\begin{proof}[Sketch of proof]
  Actually $\alpha_{i,j}$ is defined so that $v(\pi) = v(\lambda,n;a,b;\pi)$ is proportional to
  $\prod_{k=0}^{n-1} w(P_k)$ with $w(P) = w(\mathbb{L}(\lambda);a,b;P)$.
  That is, there exists a constant $\kappa$ such that $v(\pi) = \kappa \prod_{k=0}^{n-1} w(P_k)$.
  From \eqref{eq:RPPWeight}, $v(\lambda,n;a,b;\pi^{\emptyset}) = 1$ for
  the {\em empty} reverse plane partition $\pi^{\emptyset} \in \mathrm{RPP}(\lambda,n)$ whose parts are all 0.
  Thus $\kappa^{-1} = \prod_{k=0}^{n-1} w(P^{\emptyset}_k)$ where
  $(P^{\emptyset}_0,\dots,P^{\emptyset}_{n-1}) \in \mathrm{LP}(\mathbb{L}(\lambda),r,c,n)$ corresponds to
  $\pi^{\emptyset}$ by the bijection.
  We observe that $P^{\emptyset}_0$ goes from $(r,0)$ to $(0,c)$ along the border of $\mathbb{L}(\lambda)$, and
  $P^{\emptyset}_1,\dots,P^{\emptyset}_{n-1}$ are copies of $P^{\emptyset}_0$.
  Especially $w(P^{\emptyset}_k) = \prod_{i=1}^{r} a^{(r-i,c-\lambda_i)}_k$ and hence
  $\kappa^{-1}$ is equal to the denominator of the right-hand side of \eqref{eq:weightBijection}.
\end{proof}

The following is the main theorem of this paper.

\begin{thm} \label{thm:PFsProd}
  Let $a^{(s,t)}_{n} \neq 0$, $b^{(s,t)}_{n} \neq 0$ be a solution to the discrete 2D Toda molecule \eqref{eq:d2DToda}.
  Let $\lambda$ be a partition with $r$ rows and $c$ columns, and let $n \ge 0$.
  Then
  \begin{align}
    \label{eq:PFProdPartwise}
    \sum_{\pi \in \mathrm{RPP}(\lambda,n)} v(\lambda,n;a,b;\pi)
    = \prod_{i=1}^{r} \prod_{k=1}^{n} \frac{a^{(r-i,c)}_{k-1}}{a^{(r-i,c-\lambda_i)}_{k-1}}.
  \end{align}
\end{thm}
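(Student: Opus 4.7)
The plan is to chain together the bijection preceding the theorem, Lemma \ref{lem:weightBijection}, Theorem \ref{thm:NILatticePaths}, and the identity \eqref{eq:tauProd}, so that each step converts the sum over reverse plane partitions into something more algebraic, culminating in the desired product.

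First I would transfer the sum from reverse plane partitions to non-intersecting lattice paths. Since the bijection described before Lemma \ref{lem:weightBijection} identifies $\mathrm{RPP}(\lambda,n)$ with $\mathrm{LP}(\mathbb{L}(\lambda),r,c,n)$, and Lemma \ref{lem:weightBijection} says that under this bijection the weights differ by the global constant $\prod_{i=1}^{r}\prod_{k=1}^{n} a^{(r-i,c-\lambda_i)}_{k-1}$, summing \eqref{eq:weightBijection} over $\pi$ gives
\begin{align*}
  \sum_{\pi \in \mathrm{RPP}(\lambda,n)} v(\lambda,n;a,b;\pi)
  = \frac{1}{\prod_{i=1}^{r}\prod_{k=1}^{n} a^{(r-i,c-\lambda_i)}_{k-1}}
    \sum_{(P_0,\dots,P_{n-1}) \in \mathrm{LP}(\mathbb{L}(\lambda),r,c,n)}
    \prod_{k=0}^{n-1} w(\mathbb{L}(\lambda);a,b;P_k).
\end{align*}

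Next I would invoke Theorem \ref{thm:NILatticePaths} with $\mathbb{L} = \mathbb{L}(\lambda)$ and $(s,t) = (r,c)$ to evaluate the remaining sum as $\tau^{(r,c)}_n / \tau^{(\mathsf{x}(c),c)}_n$. For this I must identify $\mathsf{x}(c)$ on $\mathbb{L}(\lambda)$: since $(i,c) \in \mathbb{L}(\lambda)$ requires only $\lambda_{r-i} \ge 0$, which holds for all $i \ge 0$ (using the convention $\lambda_i = c$ for $i \le 0$), the north boundary point on the column $j=c$ is $(0,c)$, so $\mathsf{x}(c) = 0$. Thus the lattice path sum equals $\tau^{(r,c)}_n / \tau^{(0,c)}_n$.

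Finally I would apply \eqref{eq:tauProd} with $(s,t) = (r,c)$, so that $s - \mathsf{x}(t) = r$, obtaining
\begin{align*}
  \frac{\tau^{(r,c)}_n}{\tau^{(0,c)}_n} = \prod_{i=1}^{r} \prod_{k=1}^{n} a^{(r-i,c)}_{k-1}.
\end{align*}
Substituting back cancels the denominator from Lemma \ref{lem:weightBijection} into one coherent product, yielding exactly \eqref{eq:PFProdPartwise}.

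Essentially no step should present a real obstacle, since all the heavy lifting has already been done: the combinatorial content sits in Lemma \ref{lem:weightConserved} and Theorem \ref{thm:NILatticePaths}, while the algebraic content is encapsulated in \eqref{eq:tauProd}. The one place requiring care is verifying that $\mathsf{x}(c) = 0$ for the regular subset $\mathbb{L}(\lambda)$, i.e., that the top-right corner $(0,c)$ really is the north boundary point of column $c$; this must be checked against the convention $\lambda_i = c$ for $i \le 0$, but it is immediate. Everything else is bookkeeping.
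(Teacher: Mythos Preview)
Your proposal is correct and follows exactly the route the paper indicates: translate Theorem \ref{thm:NILatticePaths} (with $\mathbb{L}=\mathbb{L}(\lambda)$ and $(s,t)=(r,c)$) through the bijection using Lemma \ref{lem:weightBijection}, and then evaluate the $\tau$-ratio via \eqref{eq:tauProd}. Your explicit verification that $\mathsf{x}(c)=0$ on $\mathbb{L}(\lambda)$ is the only detail the paper leaves implicit, and you handle it correctly.
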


\begin{proof}
  This theorem is a translation of Theorem \ref{thm:NILatticePaths} via the bijection with the help of
  \eqref{eq:tauProd} and Lemma \ref{lem:weightBijection}.
\end{proof}

Theorem \ref{thm:PFsProd} allows us to find
a multiplicative partition function for reverse plane partitions of arbitrary shape with bounded parts from
each non-vanishing solution to the discrete 2D Toda molecule \eqref{eq:d2DToda}.

\section{An example}
\label{sec:example}

The discrete 2D Toda molecule \eqref{eq:d2DToda} has the solution
\begin{subequations} \label{eq:solution_apq}
  \begin{align}
    a^{(s,t)}_{n} &= [p]_{s+1}^{s+n} (1 - a [p]_{1}^{s} [q]_{1}^{t+n}), \\
    b^{(s,t)}_{n} &= a [p]_{1}^{s+n-1} [q]_{1}^{t} (1 - [q]_{t+1}^{t+n})
  \end{align}
\end{subequations}
with the notation that
$[z]_{m}^{n} = \prod_{\ell=m}^{n} z_{\ell}$ if $m \le n$,
$[z]_{m}^{n} = 1$ if $m = n+1$ and
$[z]_{m}^{n} = \prod_{\ell=n}^{m-1} z_{\ell}^{-1}$ if $m \ge n+2$.
The solution involves indeterminates $a$ and $p_{\ell}$, $q_{\ell}$ for $\ell \in \mathbb{Z}$ as parameters.

Let $\lambda$ be a partition with $r$ rows and $c$ columns.
Assume that
\begin{align}
  a = [x]_{c-\lambda'_{c}}^{\lambda_r-r}, \qquad
  p_i = [x]_{c-r+i-\mu_{i}}^{c-r+i-\mu_{i+1}}, \qquad
  q_j = [x]_{\mu'_{j+1}-j-r+c}^{\mu'_{j}-j-r+c}.
\end{align}
We then have a solution
\begin{subequations} \label{eq:specificSolution}
  \begin{align}
    a^{(s,t)}_{n} &= [x]_{c-r+s+1-\mu_{s+1}}^{c-r+s+n-\mu_{s+n+1}} (1 - [x]_{\mu'_{t+n+1}-t-n-r+c}^{c-r+s-\mu_{s+1}}), \\
    b^{(s,t)}_{n} &= [x]_{\mu'_{t+1}-t-r+c}^{c-r+s+n-1-\mu_{s+n}} (1 - [x]_{\mu'_{t+n+1}-t-n-r+c}^{\mu'_{t+1}-t-1-r+c}).
  \end{align}
\end{subequations}
Let $n \ge 0$.
The solution \eqref{eq:specificSolution} yields the weight of \eqref{eq:RPPWeight} with \eqref{eq:alpha} given by
\begin{align}
  \label{eq:specificWeight}
  v(\lambda,n;a,b;\pi) = \prod_{\ell=1-r}^{c-1} x_{\ell}^{\mathsf{tr}_{\ell}(\pi)}
  \prod_{(i,j) \in \lambda} \prod_{k=1}^{\pi_{i,j}} \frac{1 - [x]_{-n+j+k-1-\lambda'_{-n+j+k-1}}^{j-i-1}}{1 - [x]_{-n+j+k+\lambda'_{-n+j+k}}^{j-i}}.
\end{align}
As an instance of Theorem \ref{thm:PFsProd} we obtain
the following multiplicative partition function for reverse plane partition.

\begin{thm} \label{thm:partitionFunctionExample}
  Let $\lambda$ be a partition with $r$ rows and $c$ columns, and let $n \ge 0$.
  Then
  \begin{align} \label{eq:partitionFunctionExample}
    \sum_{\pi \in \mathrm{RPP}(\lambda,n)} v(\lambda,n;a,b;\pi)
    = \prod_{(i,j) \in \lambda} \frac{1 - [x]_{-n+j-\lambda'_{-n+j}}^{\lambda_i-i}}{1 - [x]_{j-\lambda'_j}^{\lambda_i-i}}
  \end{align}
  where the weight $v(\lambda,n;a,b;\pi)$ is given by \eqref{eq:specificWeight}.
\end{thm}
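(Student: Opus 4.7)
The plan is to derive Theorem~\ref{thm:partitionFunctionExample} as a direct specialization of Theorem~\ref{thm:PFsProd} to the particular solution \eqref{eq:specificSolution}. First I invoke Theorem~\ref{thm:PFsProd} for this solution to rewrite the left-hand side of \eqref{eq:partitionFunctionExample} as the explicit double product $\prod_{i=1}^{r}\prod_{k=1}^{n} a^{(r-i,c)}_{k-1}/a^{(r-i,c-\lambda_i)}_{k-1}$; the task then reduces to evaluating this product with $a^{(s,t)}_{n}$ given by \eqref{eq:specificSolution}.

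The decisive structural observation is that the prefactor $[x]_{c-r+s+1-\mu_{s+1}}^{c-r+s+n-\mu_{s+n+1}}$ of $a^{(s,t)}_{n}$ in \eqref{eq:specificSolution} depends on $s$ and $n$ alone and is \emph{independent of $t$}. Consequently this prefactor cancels identically in every ratio $a^{(r-i,c)}_{k-1}/a^{(r-i,c-\lambda_i)}_{k-1}$, and only the second, $t$-dependent, factor $(1-[x]_{\cdot}^{\cdot})$ survives. Using $\lambda'_{j}=0$ for $j>c$, the surviving numerator ($t=c$) simplifies to a factor whose lower index depends only on $k$ and $r$, while the surviving denominator ($t=c-\lambda_i$) retains its dependence on $\lambda'$; the two upper indices coincide and can be identified with the exponent $\lambda_{i}-i$ appearing on the right-hand side of \eqref{eq:partitionFunctionExample}.

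The last step is to convert the resulting double product of $rn$ factors into the target product of $|\lambda|$ factors indexed by cells $(i,j)\in\lambda$. My plan is to work one row at a time: for each fixed $i$, I match the index $k\in\{1,\dots,n\}$ with a column $j$ by a suitable shift (roughly $j=c-\lambda_{i}+k$, so that the denominator's lower index $\lambda'_{c-\lambda_i+k}+\lambda_i+1-r-k$ becomes $j-\lambda'_j$ and the numerator's lower index $1-r-k$ becomes $-n+j-\lambda'_{-n+j}$), and use the extended conventions $\lambda'_{j}=r$ for $j\le 0$ and $\lambda'_{j}=0$ for $j>c$ to unify the boundary cases $j\le n$ and $j>n$ of \eqref{eq:partitionFunctionExample}.

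The main obstacle lies in this final reindexing: one must verify that the multisets of $(1-[x]_{\cdot}^{\cdot})$ factors appearing as numerators and denominators on both sides coincide after the pairing, and that no spurious factors (coming from rows $i$ with $n<\lambda_{i}$ or $n>\lambda_{i}$) are left over. This reduces to a concrete identity among shifted sequences of lower indices involving $\lambda'$, which I expect to verify by splitting into the regimes $k\le\lambda_{i}$ versus $k>\lambda_{i}$ and checking each case against the matching cells $(i,j)$ of $\lambda$.
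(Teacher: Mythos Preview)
Your plan coincides with the paper's up to the point of substituting \eqref{eq:specificSolution} into the right-hand side of \eqref{eq:PFProdPartwise} and noting that the $t$-independent prefactor cancels. Where you diverge is in the final simplification. You propose to match the $n$ factors indexed by $k$ directly against the $\lambda_i$ factors indexed by $j$ in each row, and then argue that the surplus (when $n\neq\lambda_i$) cancels between numerator and denominator; this does work, but it forces exactly the case split you anticipate. The paper sidesteps this with a telescoping trick: it rewrites each ratio over $(i,k)$ as a telescoping product over $j=1,\dots,\lambda_i$,
\[
\frac{1-[x]_{1-k-\lambda'_{1-k}}^{\lambda_i-i}}{1-[x]_{\lambda_i+1-k-\lambda'_{\lambda_i+1-k}}^{\lambda_i-i}}
=\prod_{j=1}^{\lambda_i}\frac{1-[x]_{j-k-\lambda'_{j-k}}^{\lambda_i-i}}{1-[x]_{j-k+1-\lambda'_{j-k+1}}^{\lambda_i-i}},
\]
producing a triple product over $(i,j,k)$ which then telescopes in $k$ to give \eqref{eq:partitionFunctionExample} in one stroke, uniformly for all shapes and all $n$. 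Your route is more hands-on but equally valid once the bookkeeping is done. One caution on that bookkeeping: the substitution $j=c-\lambda_i+k$ you sketch does not turn the denominator's lower index into $j-\lambda'_j$; after the specific solution is correctly simplified the lower index is $\lambda_i+1-k-\lambda'_{\lambda_i+1-k}$, so the right pairing is $j=\lambda_i+1-k$.
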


\begin{proof}
  Substituting the solution \eqref{eq:specificSolution} for the right-hand side of \eqref{eq:PFProdPartwise} we get
  \begin{subequations}
    \begin{align}
      \prod_{i=1}^{r} \prod_{k=1}^{n} \frac{1 - [x]_{-k+1-\lambda'_{-k+1}}^{\lambda_i-i}}{1 - [x]_{-k+1+\lambda_i-\lambda'_{-k+1+\lambda_i}}^{\lambda_i-i}}
      &= \prod_{i=1}^{r} \prod_{j=1}^{\lambda_i} \prod_{k=1}^{n} \frac{1 - [x]_{j-k-\lambda'_{j-k}}^{\lambda_i-i}}{1 - [x]_{j-k+1-\lambda'_{j-k+1}}^{\lambda_i-i}}
      \\
      &= \prod_{i=1}^{r} \prod_{j=1}^{\lambda_i} \frac{1 - [x]_{-n+j-\lambda'_{-n+j}}^{\lambda_i-i}}{1 - [x]_{j-\lambda'_{j}}^{\lambda_i-i}}.
    \end{align}
  \end{subequations}
  The last product is the same as the right-hand side of \eqref{eq:partitionFunctionExample}.
\end{proof}

The multiplicative partition function in Theorem \ref{thm:partitionFunctionExample} generalizes
the multi-trace generating function \eqref{eq:Gansner} by Gansner.
Indeed \eqref{eq:partitionFunctionExample} reduces into \eqref{eq:Gansner} as $n \to \infty$ because
$\lim_{n \to \infty} [x]_{-n+\text{const.}}^{\text{const.}} = 1$ as formal power series,
$\lim_{n \to \infty} \lambda'_{-n} = r$ and
$\lim_{n \to \infty} v(\lambda,n;a,b;\pi) = \prod_{\ell=1-r}^{c-1} x_{\ell}^{\mathsf{tr}_{\ell}(\pi)}$.

Assuming $x_{\ell} = q$ for $\ell \in \mathbb{Z}$ we obtain the partition function
\begin{subequations} \label{eq:weightExample_q}
  \begin{align}
    \sum_{\pi \in \mathrm{RPP}(\lambda,n)} v(\lambda,n;a,b;\pi)
    &= \prod_{(i,j) \in \lambda} \frac{1 - q^{\lambda_i+\lambda'_{j-n}-i-j+n+1}}{1 - q^{\lambda_i+\lambda'_j-i-j+1}}
    \qquad \text{with} \\
    v(\lambda,n;a,b;\pi)
    &= q^{|\pi|} \prod_{(i,j) \in \lambda} \prod_{k=1}^{\pi_{i,j}} \frac{1 - q^{n-i-k+1+\lambda'_{-n+j+k-1}}}{1 - q^{n-i-k+1+\lambda'_{-n+j+k}}}
  \end{align}
\end{subequations}
from \eqref{eq:specificWeight} and \eqref{eq:partitionFunctionExample}.
If $\lambda = (c^r)$, an $r \times c$ rectangular shape,
that becomes the triple product formula \eqref{eq:MacMahon} by MacMahon.
The partition function \eqref{eq:weightExample_q} is thus regarded as
a generalization of \eqref{eq:MacMahon} for reverse plane partitions of arbitrary shape.


\begin{thebibliography}{10}

\bibitem{Gansner(1981HG)}
E.~R. Gansner, \emph{The {Hillman}-{Grassl} correspondence and the enumeration
  of reverse plane partitions}, J. Combin. Theory Ser. A \textbf{30} (1981),
  71--89.

\bibitem{Gessel-Viennot(1985)}
I.~Gessel and G.~Viennot, \emph{Binomial determinants, paths, and hook length
  formulae}, Adv. in Math. \textbf{58} (1985), 300--321.

\bibitem{Hirota-Tsujimoto-Imai(1993RIMS)}
R.~Hirota, S.~Tsujimoto, and T.~Imai, \emph{Difference scheme of soliton
  equations}, State of the art and perspectives of studies on nonlinear
  integrable systems (Japanese) (Kyoto, 1992), S{\=u}rikaisekikenky{\=u}sho
  K{\^o}ky{\=u}roku, 822, 1993, pp.~144--152.

\bibitem{Kamioka(2015+)}
S.~Kamioka, \emph{Plane partitions with bounded size of parts and biorthogonal
  polynomials}, arXiv:1508.01674 [math.CO].

\bibitem{Kamioka(2016)}
\bysame, \emph{A triple product formula for plane partitions derived from
  biorthogonal polynomials}, Proceedings of FPSAC 2016 (Vancouver, Canada),
  DMTCS proc., 2016, pp.~671--682.

\bibitem{Krattenthaler(2015arXiv)}
C.~Krattenthaler, \emph{Plane partitions in the work of {Richard} {Stanley} and
  his school}, arXiv:1503.05934v2 [math.CO].

\bibitem{Lindstroem(1973)}
B.~Lindstr{\"o}m, \emph{On the vector representations of induced matroids},
  Bull. London Math. Soc. \textbf{5} (1973), 85--90.

\bibitem{MacMahon(1916CA2)}
P.~A. MacMahon, \emph{Combinatory analysis, volume 2}, Cambridge University
  Press, Cambridge, 1916.

\bibitem{Stanley(1971)}
R.~P. Stanley, \emph{Theory and application of plane partitions, {I}, {II}},
  Studies in Appl. Math. \textbf{50} (1971), 167--188, 259--279.

\bibitem{Stanley(1973)}
\bysame, \emph{The conjugate trace and trace of a plane partition}, J.
  Combinatorial Theory Ser. A \textbf{14} (1973), 53--65.

\bibitem{Viennot(2000)}
X.~G. Viennot, \emph{A combinatorial interpretation of the quotient-difference
  algorithm}, Formal Power Series and Algebraic Combinatorics (Moscow, 2000),
  Springer, Berlin, 2000, pp.~379--390.

\end{thebibliography}
\providecommand{\bysame}{\leavevmode\hbox to3em{\hrulefill}\thinspace}
\providecommand{\MR}{\relax\ifhmode\unskip\space\fi MR }
\providecommand{\MRhref}[2]{%
  \href{http://www.ams.org/mathscinet-getitem?mr=#1}{#2}
}
\providecommand{\href}[2]{#2}

\end{document}